\def\Z{{\mathbb Z}}
\title{Farrell-Jones conjecture for free-by-cyclic groups}%
\author{Mladen Bestvina}
\address{\tt M.\ Bestvina, Department of Mathematics, University of Utah, 155 S.\ 1400 E.\,
  \newline Salt Lake City, UT 84112, U.S.A.
  \newline http://www.math.utah.edu/$\sim$bestvina/} %
\email{\tt bestvina@math.utah.edu}
\author{Koji Fujiwara}
\address{\tt K.\ Fujiwara, Department of Mathematics, Kyoto University. Kyoto 606-8502, Japan.
  \newline http://www.math.kyoto-u.ac.jp/$\sim$kfujiwara/} %
\email{\tt kfujiwara@math.kyoto-u.ac.jp}
\author{Derrick Wigglesworth} %
\address{\tt D.\ Wigglesworth, Department of Mathematical Sciences, University
  of Arkansas, 309 SCEN, Fayetteville, AR 72703, U.S.A.
  \newline http://www.math.utah.edu/$\sim$dwiggles/} %
\email{\tt drwiggle@uark.edu}
\thanks{\today}
\begin{document}

\begin{abstract} We prove the Farrell-Jones conjecture for
  free-by-cyclic groups.  The proof uses recently developed
  geometric methods for establishing the Farrell-Jones Conjecture.
\end{abstract}

\thanks{ The first author is supported by the NSF under the grant
  number DMS-1607236. 
  The second author is
    supported in part by Grant-in-Aid for Scientific Research (No.15H05739).  The third author
  would like to thank the Fields Institute for its hospitality. }
\subjclass[2010]{Primary 20F65, 18F25.}
\maketitle

\section{Introduction}
\label{sec:intro}

The Farrell-Jones Conjecture (which we will frequently abbreviate
by FJC) was formulated in \cite{FJ:FJC}. For a group $G$ relative to the family
$\vcyc$ of virtually cyclic subgroups it predicts an isomorphism between
the $K$-groups (resp.\ $L$-groups) of a group ring $RG$ and the
evaluation of a homology theory on a certain type of classifying space
for $G$. Such a computation, at least in principle, gives a way of
classifying all closed topological manifolds homotopy equivalent to a given
closed manifold of dimension $\geq 5$, as long as the FJC
is known for
the fundamental group. Particular significant consequences include the
Borel conjecture (a homotopy equivalence between aspherical manifolds
can be deformed to a homeomorphism) and the vanishing of the Whitehead
group $Wh(G)$ for torsion-free $G$.

The Farrell-Jones Conjecture has generated much interest in the last
decade, due in no small part to the recent development of an axiomatic
formulation that both satisfies useful inheritance properties, and
provides a method for proving the conjecture.  For example, FJC is
currently known for hyperbolic groups \cite{BLR:EquivariantCovers},
relatively hyperbolic groups \cite{Bar:RelHypFJC}, CAT$(0)$ groups
\cite{Wegner:FCJ-cat0}, virtually solvable groups
\cite{Wegner:FJC-Virt-solvable}, $\text{GL}_n(\mathbb{Z})$
\cite{BLRR:FJC-GLn}, lattices in connected Lie groups \cite{KLR}, and
mapping class groups \cite{BB:FJCMCG}. The reader is invited to
consult the papers
\cite{BLR:FJC-Applications,Bar:RelHypFJC,Bar:OnProofsofFJC,BB:FJCMCG}
for more information on applications of FJC and methods of proof.

The following theorems are the primary aims of the present note:
\begin{theorem}\label{thm:main}
  Let $\Phi\colon F_n\to F_n$ be an automorphism of a free group of
  rank $n$.  Then the mapping torus $G_\Phi=F_n\rtimes_\Phi\mathbb{Z}$
  satisfies the $K$- and $L$- theoretic Farrell-Jones Conjectures.
\end{theorem}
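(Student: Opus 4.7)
The plan is to apply the axiomatic framework for the Farrell-Jones Conjecture (finite $\mathcal{F}$-amenability / transfer reducibility of Bartels-L\"uck-Reich) to a geometric model built from relative train track (CT) data for $\Phi$. I would proceed by induction on the rank $n$, leveraging the standard FJC inheritance properties: closure under extensions by virtually cyclic groups, passage to finite-index overgroups, and inheritance from peripheral subgroups in a relatively hyperbolic or $\mathcal{F}$-amenable setting.

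First I would replace $\Phi$ by a suitable positive power (legal by finite-index inheritance) and fix a CT representative $f\colon G\to G$ of $\Phi$, producing a $\Phi$-invariant filtration of $F_n$ by free factor systems whose strata are either exponentially growing (EG) or non-exponentially growing (NEG, i.e.\ polynomially growing). The mapping torus of the restriction of $\Phi$ to each proper invariant free factor has strictly smaller rank and therefore satisfies FJC by the inductive hypothesis; these will furnish the peripheral input in the final assembly step.

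Second I would construct the geometric model. As suggested by the macros $\elec$, $\elecSP$, $\elecSH$, $\normP{\cdot}$, $\normH{\cdot}$, the space $\elec$ is an electrified (coned-off) Cayley-graph-like model for $G_\Phi$, where the subsets that are coned off are lifts of the mapping tori of the invariant free factors $\mathcal{P}$ together with leaves of the attracting laminations of the EG strata (the leaf operators $\leafr$, $\leafs$ of the strata $H_r$, $H'_s$). One then verifies a hyperbolic-like coarse geometry for $\elec$ and exhibits, for the induced action of $G_\Phi$ on $\elec$, the contracting ``long and thin covers'' demanded by the $\mathcal{F}$-amenability criterion, where the family $\mathcal{F}$ contains the stabilizers of the coned-off pieces.

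The main obstacle is the verification of the $\mathcal{F}$-amenability axioms on $\elec$: one must establish the quantitative contraction and almost-equivariance estimates for $G_\Phi$-translates, which in turn requires precise control of how leaves of laminations from different strata interact under $f$ and its iterates, and a careful dictionary between the CT-map combinatorics (including the norms $\normP{\cdot}$, $\normH{\cdot}$ and the sets $\Crit$) and the coarse geometry of the electrified space. Once these geometric inputs are in place, the closing step, transferring FJC from the inductively-treated peripheral subgroups to all of $G_\Phi$ via Bartels's inheritance theorem, should be technical but essentially formal.
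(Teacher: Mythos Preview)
Your plan is not a proof but an outline whose self-identified ``main obstacle'' is exactly the substantive content; as written there is a genuine gap. You propose to build an electrified model $\widehat{X}$ (coning off mapping tori of invariant free factors and lamination leaves) and then verify finite $\mathcal{F}$-amenability of the $G_\Phi$-action directly. But establishing those contraction and almost-equivariance estimates is essentially equivalent to proving from scratch that $G_\Phi$ is hyperbolic relative to the polynomially growing pieces --- precisely the Gautero--Lustig / Ghosh / Dahmani--Li theorem --- and you give no indication of how to carry this out beyond naming the ingredients. Incidentally, the macros $\widehat{X}$, $\widehat{X}_{\mathcal{S},\mathcal{P}}$, $|\cdot|_{\mathcal{S},\mathcal{P}}$, $\mathrm{leaf}_{H_r}$, $\mathrm{Crit}$ that you read off the preamble are vestigial: they are never used in the paper and do not reflect its argument.

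The paper's route is far shorter and avoids this obstacle entirely. For exponentially growing $\phi$ it \emph{cites} the relative hyperbolicity theorem as a black box and then applies Bartels's inheritance result for relatively hyperbolic groups; this reduces everything to the polynomially growing case. For polynomially growing $\phi$ the paper does not work with laminations or electrified spaces at all. Instead it passes to a power, takes an improved relative train track $f\colon\Gamma\to\Gamma$, and inducts on the number of \emph{edges} of $\Gamma$ (not on rank). The top edge $E_m$ yields an embedded annulus in the mapping torus and hence a splitting of $G_\Phi$ over $\mathbb{Z}$; the resulting Bass--Serre tree need not be acylindrical, but a transverse-covering / skeleton construction produces a new tree on which the action \emph{is} acylindrical, with vertex stabilizers either handled by induction or of the form $F\times\mathbb{Z}$. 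Knopf's theorem on acylindrical tree actions then finishes the argument. This replaces your hard analytic verification with a short Bass--Serre and fixed-subgroup computation.
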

\begin{theorem}\label{thm:extensions}
  Let  
  \begin{equation*}
    1\longrightarrow F_n\longrightarrow G\longrightarrow Q\longrightarrow 1.
  \end{equation*}
  be a short exact
  sequence of groups.
  If $Q$ satisfies the $K$-theoretic (resp.\ $L$-theoretic)
  Farrell-Jones conjecture, then so does $G$.
\end{theorem}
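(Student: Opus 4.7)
The plan is to deduce Theorem \ref{thm:extensions} formally from Theorem \ref{thm:main} via the standard inheritance properties of the Farrell-Jones Conjecture. The key tool is the transitivity (or fibered) principle: given a short exact sequence $1 \to K \to G \xrightarrow{\pi} Q \to 1$, if $Q$ satisfies FJC and the preimage $\pi^{-1}(V)$ satisfies FJC for every virtually cyclic subgroup $V \leq Q$, then $G$ satisfies FJC. Applied to the sequence $1 \to F_n \to G \to Q \to 1$ in the hypothesis, and using the assumed FJC for $Q$, this reduces the problem to verifying FJC for each preimage $\pi^{-1}(V)$ of a virtually cyclic $V \leq Q$.

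So fix such a $V$. Every virtually cyclic group has a normal subgroup $C$ of finite index with $C$ either trivial or infinite cyclic, and then $\pi^{-1}(C)$ is a finite-index subgroup of $\pi^{-1}(V)$ fitting into
\begin{equation*}
1 \longrightarrow F_n \longrightarrow \pi^{-1}(C) \longrightarrow C \longrightarrow 1.
\end{equation*}
If $C$ is trivial then $\pi^{-1}(C) = F_n$ is free, hence hyperbolic, hence satisfies FJC by \cite{BLR:EquivariantCovers}. If $C \cong \mathbb{Z}$ the sequence splits (since $\mathbb{Z}$ is free), so $\pi^{-1}(C) \cong F_n \rtimes \mathbb{Z}$ is a mapping torus of a free group automorphism and satisfies FJC by Theorem \ref{thm:main}. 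In either case $\pi^{-1}(V)$ contains a finite-index subgroup satisfying FJC, and since FJC is inherited by finite-index overgroups, $\pi^{-1}(V)$ itself satisfies FJC, as required.

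Essentially all the substance of Theorem \ref{thm:extensions} has been loaded into Theorem \ref{thm:main}; what remains is bookkeeping using well-known inheritance properties. Consequently, there is no genuine \emph{geometric} obstacle at this stage. The only care needed is to cite the transitivity principle and the finite-index-overgroup inheritance in versions valid simultaneously for the $K$-theoretic and $L$-theoretic flavors of FJC (both have been established in the Farrell-Jones literature, e.g.\ by Bartels-Lück and Wegner), and to note that the argument works uniformly for both types of infinite virtually cyclic groups since we pass to the canonical infinite cyclic subgroup of finite index.
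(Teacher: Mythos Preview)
Your proof is correct and follows essentially the same route as the paper: combine the pullback lemma with the transitivity principle to reduce FJC for $G$ to FJC for the preimages $\pi^{-1}(V)$ of virtually cyclic $V\leq Q$, and then invoke Theorem~\ref{thm:main}. You are simply more explicit than the paper in spelling out the passage from virtually cyclic $V$ to a finite-index cyclic subgroup $C$ and the attendant use of finite-index-overgroup inheritance; the paper compresses this step into the one-line assertion that the hypothesis of the transitivity principle ``is equivalent to the statement of Theorem~\ref{thm:main}.''
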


The structure of this paper is as follows: Section
\ref{sec:background} contains the background material on the
Farrell-Jones Conjecture necessary for our purposes.  In particular,
we review the geometric group theoretic means for proving FJC
developed in \cite{FJ:FJC} and subsequently refined and generalized in
\cite{BLR:EquivariantCovers,BLR:FJC-Hyperbolic}.  We prove Theorem
\ref{thm:main} in Section \ref{sec:proof}, using train tracks for free
group automorphisms.  We then use a standard argument to push this
result to the case of free-by-FJC extensions in Section
\ref{sec:extensions}.

\section{Background}
\label{sec:background}

While the Farrell-Jones Conjecture originated in \cite{FJ:FJC}, its
present form with coefficients in an additive category is due to
Bartels-Reich \cite{BR:FJC-Coeffs} (in the $K$-theory case) and
Bartels-L\"uck \cite{BL:FJC-Twisted} (in the $L$-theory case).  The
existence of an axiomatic formulation of geometric conditions implying
the Farrell-Jones conjecture for a group $G$ allows us to feign
ignorance about the precise definitions of most of the objects in its
statement, as well as the statement itself. We therefore restrict
ourselves to the axiomatic formulation.

A \emph{family} $\mathcal{F}$ of subgroups of $G$ is a non-empty
collection of subgroups which is closed under conjugation and taking
subgroups.  For example, the collection
$\fin$ of finite subgroups of $G$ is a family, as is the
collection $\vcyc$ of virtually cyclic subgroups of $G$.

\subsection{Geometric Axiomatization of FJC}
\label{Sec:GeometricMethods}

We first recall a regularity condition that has been useful in recent
results on the Farrell-Jones conjecture
\cite{BLR:FJC-Hyperbolic,Bar:RelHypFJC,Knopf:FJCTrees}.  Let $X$ be a
space on which $G$ acts by homeomorphisms and $\mathcal{F}$ be a
family of subgroups of $G$.  An open subset $U\subseteq X$ is said to
be an $\mathcal{F}$-subset if there is $F\in\mathcal{F}$ such that $gU
= U$ for $g\in F$ and $gU\cap U = \emptyset$ if $g\notin F$.  An open cover
$\mathcal{U}$ of $X$ is \emph{$G$-invariant} if
$gU\in\mathcal{U}$ for all $g\in G$ and all $U\in\mathcal{U}$.  A
$G$-invariant cover $\mathcal{U}$ of $X$ is said to be an
\emph{$\mathcal{F}$-cover} if the members of $\mathcal{U}$ are all
$\mathcal{F}$-subsets.  The \emph{order} (or multiplicity) of a cover
$\mathcal{U}$ of $X$ is less than or equal to $N$ if each $x\in X$ is
contained in at most $N + 1$ members of $\mathcal{U}$.

\begin{definition}
  Let $G$ be a group and $\mathcal{F}$ be a family of subgroups.  An
  action $G\curvearrowright X$ is said to be
  \emph{$N$-$\mathcal{F}$-amenable} if for any finite subset $S$ of
  $G$ there exists an open $\mathcal{F}$-cover $\mathcal{U}$ of
  $G\times X$ (equipped with the diagonal $G$-action) with the
  following properties:
  \begin{itemize}
  \item the multiplicity of $\mathcal{U}$ is at most $N$;
  \item for all $x\in X$ there is $U \in \mathcal{U}$ with
    $S\times \{x\}\subseteq U$.
  \end{itemize}
  An action that is $N$-$\mathcal{F}$-amenable for some $N$ is said to
  be \emph{finitely $\mathcal{F}$-amenable}.  We remark that such
  covers have been called wide in some of the literature.
\end{definition}

\subsection{The class $\AC(\vnil)$}
\label{sec:inheritance}
Following \cite{BB:FJCMCG} we now define the class of groups
$\AC(\vnil)$ that satisfy suitable inheritance properties and all
satisfy FJC.  Let $\vnil$ denote the class of finitely generated
nilpotent groups and their subgroups. Set $\ac^0(\vnil)=\vnil$ and
inductively $\ac^{n+1}(\vnil)$ consists of groups $G$ that admit a
finitely $\mathcal{F}$-amenable action on a compact Euclidean retract
(ER) with all groups in $\mathcal{F}$ belonging to $\ac^n(\vnil)$. The
action on a point shows that $\ac^n(\vnil)\subseteq \ac^{n+1}(\vnil)$
and we set $\AC(\vnil)=\bigcup_{n=0}^\infty \ac^n(\vnil)$.

\begin{proposition}[\cite{BB:FJCMCG}]\label{prop:FJCInheritance}
    \begin{enumerate}
    \item [(i)] $\AC(\vnil)$ is closed under taking subgroups,
    taking finite index overgroups, and finite products.
    \item [(ii)] All
      groups in $\AC(\vnil)$ satisfy the Farrell-Jones Conjecture.
      \end{enumerate}
\end{proposition}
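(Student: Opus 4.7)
I would proceed by induction on the level $n$ in the filtration $\AC(\vnil)=\bigcup_{n\geq 0}\ac^n(\vnil)$; part (ii) reduces via a single transitivity step, and part (i) via three closure arguments, each passing through the filtration step.

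For (ii), the base case $\ac^0(\vnil)=\vnil$ consists of finitely generated nilpotent groups, which are virtually poly-$\mathbb{Z}$ and therefore satisfy FJC by Wegner's theorem on virtually solvable groups. For the inductive step, $G\in\ac^{n+1}(\vnil)$ comes equipped, by definition, with a finitely $\mathcal{F}$-amenable action on a compact ER with $\mathcal{F}\subseteq\ac^n(\vnil)$; the axiomatic geometric criterion of Bartels--L\"uck--Reich recalled in Section \ref{Sec:GeometricMethods} then promotes FJC from the isotropy family (known inductively) to $G$ itself. I would invoke this theorem as a black box, as this is the only non-formal content.

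For (i), each closure property is proved by a parallel induction on $n$. For subgroups $H\leq G$, restricting the witnessing action of $G$ on $X$ to $H$ gives an $\mathcal{F}|_H$-amenable action with $\mathcal{F}|_H=\{F\cap H:F\in\mathcal{F}\}\subseteq\ac^n(\vnil)$ by the subgroup inductive hypothesis. For finite products, the diagonal action of $G_1\times G_2$ on the compact ER $X_1\times X_2$ is finitely $(\mathcal{F}_1\times\mathcal{F}_2)$-amenable via products of witnessing covers (multiplicity at most $(N_1+1)(N_2+1)-1$), and $\mathcal{F}_1\times\mathcal{F}_2\subseteq\AC(\vnil)$ by the lower-level product closure. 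For a finite-index overgroup $H\leq G$ with $H\in\ac^n(\vnil)$ acting on $X$, the induced space $\widetilde{X}=G\times_H X$ is a disjoint union of $[G:H]$ copies of $X$, hence a compact ER; translating an $H$-equivariant witnessing cover by coset representatives yields an $\mathcal{F}'$-amenable $G$-cover whose isotropy consists of $G$-conjugates of members of $\mathcal{F}$, still in $\ac^n(\vnil)$.

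The only genuine difficulty lies in the inductive step of (ii): turning a finitely $\mathcal{F}$-amenable action on a compact ER into a proof of FJC is the deep input, which one is meant to cite rather than reprove. Within (i), the finite-index overgroup case is where the bookkeeping is subtlest -- verifying that the induced cover of $G\times\widetilde{X}$ has controlled multiplicity and that the new isotropy groups all remain at the right level of the filtration -- whereas subgroup and product closure are entirely formal given the inductive hypotheses.
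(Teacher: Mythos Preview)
The paper does not give its own proof of this proposition: it is quoted verbatim from \cite{BB:FJCMCG} and used as a black box throughout. There is therefore nothing in the present paper to compare your proposal against.

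That said, your sketch is broadly the argument one finds in the cited reference. A couple of small points you would want to tidy up if you were to write it out: for the finite-index overgroup closure, the base case $n=0$ is not entirely trivial, since a finite-index overgroup of a finitely generated nilpotent group need not be nilpotent (e.g.\ $\mathbb{Z}\leq D_\infty$); your induced-space construction handles this uniformly if you take $X$ to be a point when $H\in\ac^0(\vnil)$, so that $G$ acts on the finite set $G/H$, but this deserves a sentence. Also, in part (ii) the relevant black box is that a finitely $\mathcal{F}$-amenable action on a compact ER yields FJC \emph{relative to} $\mathcal{F}$, after which one applies the transitivity principle together with the inductive hypothesis that members of $\mathcal{F}$ satisfy FJC; you have compressed these two steps into one, which is fine at the level of a sketch but would need to be separated in a full proof.
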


Our main result can now be stated as follows.

\begin{theorem}\label{main}
  Let $\Phi:F_n\to F_n$ be an automorphism. Then $G=F_n\ltimes_\Phi
  \Z$ belongs to $\AC(\vnil)$.
\end{theorem}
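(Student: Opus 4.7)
The plan is to induct on a complexity measure of $\Phi$ read off from a completely split improved relative train track representative $f\colon\Gamma\to\Gamma$, together with its filtration $\emptyset=\Gamma_0\subset\cdots\subset\Gamma_k=\Gamma$ by $f$-invariant subgraphs. Each stratum $H_i$ is either exponentially growing (EG) or non-exponentially growing (NEG). I would order pairs (number of EG strata, polynomial growth rate of the NEG part) lexicographically and arrange for the complexity to strictly drop at every step. The top stratum $H_k$ drives the inductive step.

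For the base case, $\Phi$ is polynomially growing (no EG strata), and I would nest a secondary induction on polynomial degree. In the finite-order case, to which I may pass by replacing $\Phi$ by a suitable power (Proposition \ref{prop:FJCInheritance}(i) lets me move between finite-index overgroups), $G$ is virtually $F_n\times\Z$; since $F_n$ is hyperbolic and hence in $\AC(\vnil)$ by \cite{BLR:EquivariantCovers}, and $\AC(\vnil)$ is closed under finite products and finite-index overgroups, this settles the base. For larger polynomial degree I would exploit the Kolchin-type hierarchy of $\Phi$-invariant free factors of $F_n$: it supplies either a CAT$(0)$ action of $G$ (to which \cite{Wegner:FCJ-cat0} applies) or a graph-of-groups structure on $G$ whose edge and vertex stabilizers are mapping tori of lower-degree polynomial automorphisms, at which point I would combine \cite{Knopf:FJCTrees} with the secondary inductive hypothesis.

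For the inductive step, assume $H_k$ is EG with attracting lamination $\Lambda^+$. Using $\Lambda^+$ I would produce either a hyperbolic cocompact-style action in the spirit of \cite{BLR:FJC-Hyperbolic}, building the $\mathcal{F}$-cover directly from the train-track dynamics on the universal cover of the mapping torus, or a relatively hyperbolic structure on $G$ whose peripheral subgroups are supported on the sublevel $\Gamma_{k-1}$, in which case I would invoke \cite{Bar:RelHypFJC}. Either way the resulting family $\mathcal{F}$ consists of mapping tori of automorphisms of strictly smaller complexity, so the primary inductive hypothesis supplies $\mathcal{F}\subset\AC(\vnil)$ and we conclude $G\in\AC(\vnil)$. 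The main obstacle is precisely the EG step: producing an $\mathcal{F}$-cover of bounded multiplicity whose members each contain $S\times\{x\}$ for an arbitrary prescribed finite $S\subset G$, uniformly in $S$ and along the cyclic direction of the mapping torus, is where the train-track/lamination machinery must do the real work; by contrast, the polynomial base case reduces quickly to existing technology for graphs of groups and for CAT$(0)$ groups.
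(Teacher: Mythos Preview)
Your outline has the right shape but two substantial gaps. For the exponential step you propose to manufacture either an $\mathcal{F}$-amenable action or a relatively hyperbolic structure from the top EG lamination, and you correctly flag this as where the real work lies. The paper does not attempt this at all; it simply cites the Gautero--Lustig theorem (with complete proofs now due to Ghosh and Dahmani--Li, Theorem~\ref{thm:rel-hyp}) that any exponentially growing $G_\Phi$ is hyperbolic relative to mapping tori of \emph{polynomially growing} automorphisms. This collapses your entire primary induction on EG strata to a single citation plus Theorem~\ref{Bartels}, reducing everything to the polynomial case.

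More seriously, in the polynomial case you invoke Theorem~\ref{Knopf} without addressing its acylindricity hypothesis. The Bass--Serre tree coming from a Kolchin-type splitting will almost never carry an acylindrical $G$-action: edge groups are infinite cyclic conjugates of $\langle t\rangle$, and whenever $\Phi$ has nontrivial fixed subgroup, distinct edges can share a common stabilizer along arbitrarily long segments. The paper repairs this by grouping edges of the Bass--Serre tree with equal stabilizer into subtrees $T_E$, taking the transverse covering they form, and passing to its skeleton $S$; one then checks that $G\curvearrowright S$ is acylindrical and that the new vertex stabilizers are either covered by induction (on the number of edges of $\Gamma$, not on polynomial degree) or isomorphic to $\Fix(\Phi)\times\Z$. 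Without this or an equivalent maneuver, Knopf's theorem is unavailable. Finally, your CAT(0) alternative is not on offer: Gersten's linearly growing $F_3\rtimes\Z$ is not CAT(0), so the graph-of-groups route, with acylindricity repaired, is the only one.
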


The following two theorems of Knopf and of Bartels will be crucial.

\begin{theorem}[{\cite[Corollary 4.2]{Knopf:FJCTrees}}]\label{Knopf}
  Let $G$ act acylindrically on a simplicial tree $T$ with finitely
  many orbits of edges. If all vertex
  stabilizers belong to $\AC(\vnil)$ then so does $G$.
\end{theorem}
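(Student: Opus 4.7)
The plan is to prove $G\in\ac^{n+1}(\vnil)$ for a suitable $n$, by exhibiting a finitely $\mathcal{F}$-amenable action of $G$ on a compact ER where $\mathcal{F}$ consists of subgroups of vertex stabilizers. Finitely many edge orbits give finitely many vertex orbits; since each vertex stabilizer lies in $\bigcup_m\ac^m(\vnil)$, a single $n$ works simultaneously, and by the subgroup/finite-product closure in Proposition \ref{prop:FJCInheritance}(i) every member of $\mathcal{F}$ lies in $\ac^n(\vnil)$. From that point on the game is purely geometric.

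For the compact ER, I would take the end-compactification $\widehat T = T\cup\partial T$, where a basic neighborhood of an end $\xi\in\partial T$ is the component of $T\setminus F$ lying on the $\xi$-side of a large finite subtree $F\subset T$. Since $G$ is countable and there are finitely many edge orbits, $T$ carries a countable simplicial structure, so $\widehat T$ is a compact, contractible, one-dimensional metrizable space; standard arguments show it is an absolute retract, hence a compact ER. The $G$-action on $T$ extends continuously.

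To construct, for each finite $S\subset G$, an open $\mathcal{F}$-cover of $G\times\widehat T$ of bounded multiplicity whose members engulf every $S\times\{x\}$, I would combine two types of members. Over interior points of $T$, I use slightly enlarged open stars of vertices and their $G$-translates; each such set has vertex-stabilizer stabilizer, and the multiplicity is bounded by the tree structure together with the finite number of vertex orbits. At an end $\xi$, I form a deep "horoball-like" neighborhood by choosing a vertex $v_\xi$ far out along the ray to $\xi$ such that, for every $s\in S$, the ray and its $s$-translate coincide on an initial segment strictly longer than the acylindricity constant; the component of $\widehat T\setminus\{v_\xi\}$ containing $\xi$ then has setwise stabilizer forced by acylindricality to fix a vertex, giving an $\mathcal{F}$-set. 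A small thickening in the $G$-factor ensures the $S\times\{x\}$ condition.

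The principal obstacle is reconciling the two types of members: the end-neighborhoods must be chosen deep enough (in terms of $S$ and the acylindricality constant) so that their stabilizers really are vertex stabilizers, yet shallow and numerous enough to cover boundary points while meeting the interior cover without blowing up the multiplicity. Acylindricality does double duty here \textemdash{} it both forces large stabilizers of long overlapping segments to fix a vertex, and it bounds the number of $G$-translates of an end-neighborhood that can share the deep segment of a given ray \textemdash{} and this is precisely what yields the uniform bound $N$ and completes the proof that $G\in\ac^{n+1}(\vnil)\subseteq\AC(\vnil)$.
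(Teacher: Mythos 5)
This theorem is cited from Knopf and not proved in the paper, so I am comparing your sketch against Knopf's argument, whose broad shape you have correctly intuited: compactify $T$ to a compact ER, build a finitely $\mathcal{F}$-amenable $G$-action on it, and conclude $G\in\ac^{n+1}(\vnil)$. However, three of your specific technical claims are wrong or seriously underargued, and two of them are not cosmetic.

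First, the compactification. With the topology you describe (usual topology on $T$, neighborhoods of an end $\xi$ given by components of $T\setminus F$ for finite subtrees $F$), the space $\widehat T$ is compact \emph{only if $T$ is locally finite}. Under the hypotheses of the theorem $T$ typically is not: finitely many edge orbits is consistent with a vertex stabilizer $G_v$ that has infinite index over an incident edge stabilizer, giving $v$ infinite valence. (Already $A*_C B$ with $[A:C]=\infty$ does this.) One must instead use the observer's topology on $T\cup\partial T$, in which a sequence of points escaping through infinitely many different branches at a vertex $v$ converges to $v$; bounded subsets of $T$ are then not open. That space \emph{is} a compact metrizable dendrite (hence an ER) when $T$ is countable.

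Second, the family $\mathcal{F}$ and the role of acylindricity. You take $\mathcal{F}$ to be subgroups of vertex stabilizers, and you assert that the setwise stabilizer of a deep half-tree around an end $\xi$ is ``forced by acylindricality to fix a vertex.'' Neither half of this is right. The setwise stabilizer of a half-tree in a simplicial tree is automatically a (directed) edge stabilizer---no acylindricity needed---so the phrase is a misattribution. More importantly, that is not the group that matters. In the definition of $N$-$\mathcal{F}$-amenability you need the cover element $U\subset G\times\widehat T$ to satisfy $gU\cap U=\emptyset$ for every $g$ outside some $F\in\mathcal{F}$. If $S$ contains a hyperbolic element $g$ with attracting end $\xi$ (an unavoidable case), then the wideness requirement $S\times\{\xi\}\subset U$ forces $gU\cap U\neq\emptyset$ for any $U$ built as a thickening of $S$ times a half-tree around $\xi$, while no choice of half-tree $W$ yields $gW=W$. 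So these end-sets cannot be $\mathcal{F}$-subsets for your $\mathcal{F}$. The fix is to enlarge the family: the stabilizer of an end $\xi$ is, by acylindricity, virtually cyclic (the elliptic part fixes arbitrarily deep segments of a ray to $\xi$ and so is finite/trivial, and the translation-length map to $\mathbb{Z}$ then has finite kernel). This is precisely where acylindricity is used in controlling stabilizers, and it forces $\mathcal{F}$ to include $\vcyc$ alongside subgroups of vertex stabilizers. That is harmless for the conclusion since $\vcyc\subset\AC(\vnil)$, but your stated $\mathcal{F}$ is too small for the construction to go through.

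Third, the multiplicity bound. You correctly identify this as ``the principal obstacle'' but only gesture at it. Everything hard in Knopf's proof lives here: one must interleave vertex-star cover elements with end-neighborhood cover elements, choose depths per end that depend on $S$ and the acylindricity constant, and then show the resulting translates overlap a uniformly bounded number of times (this is the point where acylindricity does its second job, bounding how many translates of a deep end-set can share a long segment). Merely asserting that ``acylindricity does double duty'' is not an argument, and with the compactification and family issues above unresolved there is no basis to conclude a uniform $N$ exists. As written, the proposal is a plausible outline of Knopf's strategy but does not constitute a proof.
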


\begin{theorem}[\cite{Bar:RelHypFJC}]\label{Bartels}
  Suppose $G$ is hyperbolic relative to a finite collection of
  subgroups, each of which is in $\AC(\vnil)$. Then $G$ also belongs
  to $\AC(\vnil)$.
\end{theorem}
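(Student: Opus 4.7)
The plan is to produce, for every finite subset $S\subset G$, an open cover of $G\times Z$ of bounded multiplicity that is an $\mathcal{F}$-cover for a family $\mathcal{F}$ lying in $\AC(\vnil)$, where $Z$ is a compact ER with a $G$-action. The family $\mathcal{F}$ will be generated (under conjugation and subgroups) by the infinite virtually cyclic subgroups — which already lie in $\ac^0(\vnil)=\vnil$ — together with conjugates of the peripheral subgroups $P_1,\dots,P_k$, which lie in $\AC(\vnil)$ by hypothesis. Thus $\mathcal{F}\subseteq \ac^n(\vnil)$ for some $n$, and the theorem follows from the very definition of $\ac^{n+1}(\vnil)$, with the FJC conclusion then handed to Proposition~\ref{prop:FJCInheritance}.

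First I would build the space $Z$. Following Groves--Manning, attach a combinatorial horoball to the Cayley graph of $G$ along every coset of every peripheral $P_i$; the resulting \emph{cusped space} $X_c$ is Gromov hyperbolic, and $G$ acts properly and isometrically (not cocompactly). Compactify by the Gromov boundary to get the Bowditch compactification $\overline{X}_c$, a compact metrizable $G$-space whose boundary decomposes into one orbit of parabolic fixed points per peripheral, together with conical limit points. One then takes $Z$ to be either $\overline{X}_c$ itself or an equivariant finite-dimensional ER thickening of it; the ER property reduces to the finite-dimensionality of the Bowditch boundary.

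Next I would construct the cover in two regimes, choosing a depth $R=R(S)$. In the \emph{thick part}, namely $(G\times\overline{X}_c)$ restricted to points that are not deep in any horoball and not close to any parabolic point, the $G$-action is, up to uniformly bounded error, cocompact on a hyperbolic space; hence the Bartels--L\"uck--Reich long-thin-cover construction for hyperbolic groups produces a $G$-invariant cover of bounded multiplicity and diameter $\gtrsim |S|$, with virtually cyclic stabilizers. In the \emph{cusp part}, attached to each horoball $\mathcal{H}$ with stabilizer $gP_ig^{-1}$, take one open set $V_\mathcal{H}$ consisting of the subhoroball below depth $R$ together with a basis neighborhood of the parabolic fixed point in the boundary. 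By construction $\mathrm{Stab}(V_\mathcal{H})=gP_ig^{-1}$, and translates of the slab $\{e\}\times V_\mathcal{H}$ by a fundamental domain for this stabilizer in $G$ yield $\mathcal{F}$-pieces of $G\times V_\mathcal{H}$ containing all required $S$-fibers.

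The main obstacle is the \emph{interface} between these two families of cover elements in the annular region around each horoball at depth $\approx R$. In the cusp, neighborhoods must thicken transverse to the geodesic flow in a manner dictated by peripheral geometry (so that their stabilizer stays inside the peripheral), while the hyperbolic cover thickens uniformly; interleaving them while (i) preserving $G$-invariance, (ii) keeping the global multiplicity uniformly bounded as $|S|\to\infty$, and (iii) not enlarging any stabilizer beyond the intended $\mathcal{F}$ requires a delicate calibration of $R$ against $|S|$ and a partition-of-unity-style truncation on the overlap. A secondary, more foundational, difficulty is verifying that $\overline{X}_c$ has finite topological dimension and admits an equivariant ER thickening; this is where assumptions implicit in ``relatively hyperbolic'' (fine graph, finitely generated peripherals) must be used.
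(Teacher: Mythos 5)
This theorem is quoted in the paper as a black box from \cite{Bar:RelHypFJC}; the paper contains no proof of it, so there is no in-house argument to compare yours against. What you have sketched is, in broad strokes, a plausible reconstruction of the external result: Bartels does work with a cusped/coned-off picture and a compactification to produce finitely $\mathcal{F}$-amenable actions with $\mathcal{F}$ generated by virtually cyclic subgroups together with the peripherals, so your choice of family and your thick/cusp dichotomy for building the cover are in the right spirit.

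That said, the two places you flag as ``obstacles'' are not side issues — they are precisely where the genuine content of \cite{Bar:RelHypFJC} lives, and your sketch does not supply them. The interface problem is not resolved by a ``partition-of-unity-style truncation''; the long-thin covers on the thick part are produced from a flow space and their combinatorics near a horoball must be compatible with the dynamics of the parabolic subgroup, and controlling multiplicity across that seam is the technical core of the proof (Bartels introduces a coarse flow space for relatively hyperbolic groups expressly to manage this). Likewise, the compact-ER input required by the definition of $\ac^{n+1}(\vnil)$ (and hence by Proposition \ref{prop:FJCInheritance}) is not automatic from the Bowditch compactification: one needs finite covering dimension and an equivariant embedding/absorption argument, and you correctly note you have not done this. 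There is also a small bookkeeping slip: a single open set $V_{\mathcal{H}}$ per horoball with full peripheral stabilizer does not by itself yield the needed $\mathcal{F}$-cover of $G\times Z$ with the $S$-wideness property — one must produce a $G$-invariant family of such sets whose multiplicity you control after superimposing the thick-part cover, and that is again the interface problem. In short, your outline is consistent with the cited proof's architecture, but each of the steps you defer is exactly a step the cited paper must, and does, carry out in detail; as written the proposal is an honest sketch rather than a proof.
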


\subsection{Background on outer automorphisms}
\label{sec:OuterAutomorphisms}
Outer automorphisms of a finitely generated, rank $n$ free group $F_n$
come in two flavors according to the growth rate of conjugacy classes
of $F_n$.  Indeed, let $\phi\in\Out(F_n)$ and let $\mathcal{C}$ denote
the set of conjugacy classes of elements of $F_n$, which comes with a
natural action of $\Out(F_n)$.  After choosing a basis
$\{x_1,\ldots,x_n\}$ for $F_n$, elements of $\mathcal{C}$ are in
correspondence with cyclically reduced words in the $x_i$'s.  We use
$|\cdot|$ to denote the word length of such a cyclically reduced
representative.  A standard fact in the field (that follows from the
existence of relative train tracks \cite{BH:TrainTracks}) is that each
$c\in\mathcal{C}$ has a well defined asymptotic exponential growth
rate: for all $c\in\mathcal{C}$, there exists $\lambda_{\phi,c}\geq 1$
such that
$\lim_{k\to\infty}\frac{1}{k}\log |\phi^k(c)|=\log\lambda_{\phi,c}$.
Moreover, as $c$ varies over all conjugacy classes in $F_n$, with $\phi$
fixed, the set of $\lambda$'s occurring is finite:
$\#\{\lambda_{\phi,c}\}_{c\in\mathcal{C}}<\infty$.  We say that
\emph{$\phi$ has exponential growth} if there exists $c\in\mathcal{C}$
whose corresponding $\lambda_{\phi,c}$ is greater than 1.  Otherwise,
we say \emph{$\phi$ has polynomial growth}; the justification for this
terminology requires machinery that we will not need.

The following structure theorem of Gautero-Lustig for free-by-cyclic groups is
crucial. The original proof remains incomplete as it relies on
unproven facts about certain type of train tracks. In the meantime
Ghosh and Dahmani-Li gave different proofs.

\begin{theorem}[\cite{GL:RelHyp,Ghosh:RelHyp,
    DL:RelativeHyperbolicityFreeProducts}]\label{thm:rel-hyp}
  Suppose $\Phi\colon F_n\to F_n$ grows exponentially.  Then $G_\Phi$
  is hyperbolic relative to a finite collection of subgroups, each of
  the form $F\rtimes_\psi\mathbb{Z}$ for a finite rank free group $F$
  and a polynomially growing automorphism $\psi\colon F\to F$.
\end{theorem}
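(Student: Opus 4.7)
The plan is to extract the peripheral structure from relative train track data and then verify Farb's criterion for relative hyperbolicity directly on a geometric model of $G_\Phi$. First, after replacing $\Phi$ by a suitable positive power (harmless since the resulting peripheral structure is commensurable), I would apply the Bestvina-Feighn-Handel relative train track theorem, in its refined CT form due to Feighn-Handel, to obtain a representative $f\colon G \to G$ of $\Phi$ with a filtration $\emptyset = G_0 \subsetneq G_1 \subsetneq \cdots \subsetneq G_N = G$ whose strata are either exponentially growing (EG) or non-exponentially growing (NEG). The hypothesis that $\Phi$ grows exponentially guarantees that at least one EG stratum appears.

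Next I would read off the peripheral subgroups from the polynomial part of the filtration. Let $G_{\mathrm{poly}} \subseteq G$ be the maximal $f$-invariant subgraph all of whose strata are NEG, and let $C_1,\dots,C_m$ be its connected components. Each inclusion $C_i \hookrightarrow G$ determines (up to conjugacy) an injection $F^{(i)} := \pi_1(C_i) \hookrightarrow F_n$, and a power of $f$ restricts to a self-homotopy equivalence of $C_i$ realizing a polynomially growing automorphism $\psi_i\colon F^{(i)} \to F^{(i)}$. The mapping tori $F^{(i)} \rtimes_{\psi_i} \Z$, together with their $G_\Phi$-conjugates, give the finite list of candidate peripheral subgroups.

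To establish relative hyperbolicity of $G_\Phi$ with respect to this collection, I would build a geometric model $X$ as the universal cover of the mapping torus of $f$, and form the cusped space $\widehat{X}$ by attaching Groves-Manning combinatorial horoballs along all $G_\Phi$-translates of lifts of the submapping tori $M_{f|_{C_i}}$. Relative hyperbolicity then reduces to showing that $\widehat{X}$ is Gromov hyperbolic, or equivalently (by Farb's criterion) that the electric space obtained by coning off the peripheral cosets in $X$ is $\delta$-hyperbolic and has bounded coset penetration.

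The hard part is verifying hyperbolicity of the electrified space. The required estimates control cancellation in $f$-iterates of paths that interact with both EG and NEG strata; the key technical input is the bounded cancellation lemma coupled with the Perron-Frobenius dynamics on EG strata, which should force any electric geodesic that exits and re-enters a horoball to be straightenable into a bounded neighborhood of that horoball, thereby producing $\delta$-thin triangles. This is precisely the step at which, as noted above, Gautero-Lustig's original argument depends on unverified train track properties; Ghosh's proof uses completely split improved relative train tracks to supply the missing estimates, while Dahmani-Li sidestep the issue by applying a combination theorem for relatively hyperbolic groups to the HNN presentation of $G_\Phi$ over $F_n$.
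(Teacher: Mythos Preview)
The paper does not give its own proof of this theorem. It is stated as a result quoted from the literature, with the explicit remark that Gautero--Lustig's original argument is incomplete and that Ghosh and Dahmani--Li supplied independent proofs. There is therefore nothing in the paper to compare your proposal against; the authors simply invoke the theorem as a black box in the proof of Theorem~\ref{thm:main}.

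As for the proposal itself: what you have written is an accurate high-level outline of the strategy pursued in the cited papers, but it is not a proof. You correctly identify the candidate peripheral subgroups via the NEG part of a CT representative, and you correctly locate the crux of the matter in the hyperbolicity of the cusped (or electrified) space. However, your final paragraph explicitly defers that step to Ghosh and to Dahmani--Li rather than carrying it out. The substantive content of the theorem lies precisely in those estimates (bounded coset penetration, control of cancellation across EG/NEG strata, or alternatively verifying the hypotheses of a combination theorem), and none of that is supplied here. One technical point also needs care: the peripheral subgroups are the mapping tori of the maximal \emph{polynomially growing} subgroups of $F_n$ under $\Phi$, and identifying these with fundamental groups of components of the ``NEG subgraph'' requires justification, since NEG strata can be interleaved with EG strata in the filtration and the correct invariant subgraphs are not simply the union of all NEG strata.
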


\subsection{Transverse coverings}
\label{sec:TransverseCover}
Consider an action $G\curvearrowright T$ on a simplicial tree by
isometries.  A \emph{transverse covering} \cite[Definition
4.6]{G:LimitGroups} in $T$ is a $G$-invariant family $\mathcal{Y}$ of
non-degenerate closed subtrees of $T$ such that any two distinct
subtrees in $\mathcal{Y}$ intersect in at most one point.  A
transverse covering $\mathcal{Y}$ gives rise to a new tree, $S$,
called the \emph{skeleton of $\mathcal{Y}$} as follows: the vertex set
$V(S)=V_0\cup V_1$ where the elements of $V_0$ are in one-to-one
correspondence with elements of $\mathcal{Y}$, and the elements of
$V_1$ are in correspondence with nonempty intersections (necessarily
consisting of a single point) of distinct elements of $\mathcal{Y}$.
Edges are determined by set containment: there is an edge from
$x\in V_1$ to $Y\in V_2$ if $x\in Y$.  The action of $G$ on
$\mathcal{Y}$ determines an action $G\curvearrowright S$.

\section{Proof of Theorem \ref{main}}
\label{sec:proof}
This section contains a proof of our first main result, which follows
with relative ease by combining results from the literature with
Proposition \ref{prop:PGFJC} below.  After explaining the reduction to the case of polynomially growing automorphisms,
the remainder of the section is devoted to the proof of Proposition \ref{prop:PGFJC}.

\begin{proposition}\label{prop:PGFJC}
  Suppose $\phi\in\Out(F_n)$ has polynomial growth and let $\Phi$ be
  any lift to $\Aut(F_n)$.  Then the free-by-cyclic group $G_\Phi$
  belongs to $\AC(\vnil)$.
\end{proposition}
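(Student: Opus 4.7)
The plan is to induct on the rank $n$, reducing first to the case where $\Phi$ is unipotent polynomially growing (UPG). Since some power of $\Phi$ is UPG and $\AC(\vnil)$ is closed under finite-index overgroups (Proposition~\ref{prop:FJCInheritance}(i)), this reduction is legitimate; the base cases $n\le 1$ are immediate as $G_\Phi\in\{\Z,\Z^2\}\subset\vnil\subset\AC(\vnil)$.

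For the inductive step, one exploits the upper-triangular structure of a UPG automorphism: there is a basis $\{a_1,\dots,a_n\}$ of $F_n$ with $\Phi(a_i)=a_iu_i$ and $u_i\in\langle a_1,\dots,a_{i-1}\rangle$ (possibly after composing $\Phi$ with an inner automorphism, which leaves $G_\Phi$ unchanged up to isomorphism). Each $F_i:=\langle a_1,\dots,a_i\rangle$ is $\Phi$-invariant, so one obtains a chain $\Z=\Gamma_0<\Gamma_1<\dots<\Gamma_n=G_\Phi$ of mapping tori $\Gamma_i=F_i\rtimes_{\Phi|_{F_i}}\Z$, in which $\Gamma_i$ is an HNN extension of $\Gamma_{i-1}$ over the cyclic subgroup $\langle t\rangle$ with stable letter $a_i$ and defining relation $a_i^{-1}ta_i=u_it$. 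With $\Gamma_{i-1}\in\AC(\vnil)$ in hand by induction, the idea is to produce a simplicial $\Gamma_i$-tree with finitely many edge orbits, acylindrical action, and vertex stabilizers in $\AC(\vnil)$, and then apply Theorem~\ref{Knopf}.

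The main obstacle is that the Bass--Serre tree of the above HNN splitting is typically \emph{not} acylindrical. For example, whenever some stage introduces a $\Phi$-fixed generator (so $u_i=1$ and $[a_i,t]=1$), the cyclic subgroup $\langle t\rangle$ is central in adjacent vertex groups and fixes infinitely many edges at each vertex. To repair this I would build a refined $G_\Phi$-tree as the skeleton of a transverse covering (Section~\ref{sec:TransverseCover}) that collapses the abelian pieces coming from the fixed directions of $\Phi$. In the refined tree the vertex stabilizers should be either mapping tori of strictly smaller-rank UPG automorphisms (in $\AC(\vnil)$ by induction) or virtually abelian subgroups of the form $\Z^k\times\langle t\rangle$ (in $\vnil$), and edge stabilizers should be virtually cyclic. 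The heart of the argument is verifying that this refined tree is simplicial with finitely many edge orbits and that the $G_\Phi$-action on it is acylindrical; this is where the fine structure of UPG train-track maps enters, in particular the filtration into linear and higher-order polynomial strata and the Nielsen paths that control the commuting phenomena in $G_\Phi$. Once the refined tree is in place, Theorem~\ref{Knopf} closes the induction and yields $G_\Phi\in\AC(\vnil)$.
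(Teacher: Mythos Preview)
Your strategy is the paper's: pass to a UPG power, split $G_\Phi$ over $\langle t\rangle$ using the top of the filtration, repair the non-acylindrical Bass--Serre tree by passing to the skeleton of a transverse covering, and apply Theorem~\ref{Knopf}. The paper inducts on the number of edges in an improved relative train track graph rather than on rank (which sidesteps the question of whether that graph can always be taken to be a rose), but otherwise the outlines coincide.

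Two things need correcting. First, the $V_0$ vertex stabilizers in the skeleton are not of the form $\Z^k\times\langle t\rangle$: the stabilizer of the subtree $T_E=\{E':\Stab(E')=\Stab(E)\}$ is the centralizer of $t$ in $G_\Phi$, namely $\Fix(\Phi)\times\langle t\rangle$, and $\Fix(\Phi)$ is a (typically nonabelian) finite-rank free group. This is still in $\AC(\vnil)$, so the slip is harmless, but your picture of the covering as ``collapsing abelian pieces'' is not accurate. Second, the acylindricity you defer to ``Nielsen paths and higher-order strata'' requires none of that machinery. Writing each element of $G_\Phi$ uniquely as $gt^k$ with $g\in F_n$, a conjugate of $t$ takes the form $g\Phi(g^{-1})t$, so any two edge stabilizers in the Bass--Serre tree either coincide or intersect trivially. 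This single observation simultaneously defines the transverse covering $\{T_E\}$, identifies $\Stab(T_E)\cong\Fix(\Phi)\times\langle t\rangle$, and shows that an element fixing a segment of length $\ge 6$ in the skeleton must stabilize edges in two distinct $T_E$'s and hence is trivial. The step you flagged as the heart of the argument is thus a short normal-form computation, not a train-track analysis.
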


It's easily seen that the group $G_\Phi=F_n\rtimes_{\Phi}\mathbb{Z}$
depends only on the outer class of $\Phi$, so we will henceforth
denote it by $G_\phi$.
\begin{proof}[Proof of Theorem \ref{thm:main}]
  If $\phi\in Out(F_n)$ has polynomial growth, then
  Proposition \ref{prop:PGFJC} applies and we are done.  Otherwise,
  $\phi$ has exponential growth and we can apply
  Theorem \ref{thm:rel-hyp}
  to conclude that $G_\phi$ is hyperbolic relative to a collection of
  peripheral subgroups each of which is isomorphic to the mapping
  torus of a polynomially growing free group automorphism. By Theorem
  \ref{Bartels} the conclusion follows.
\end{proof}

\subsection{Proof of Proposition \ref{prop:PGFJC}}
Since $\AC(\vnil)$ passes to finite index overgroups, we may replace
$\phi$ by a positive power. This allows us to
apply \cite[Theorem 5.1.5]{BFH:Tits1} to obtain an improved
  relative train track map representing (a power of) $\phi$: i.e., a homotopy
  equivalence $f\colon \Gamma\to \Gamma$ of a finite marked graph
  satisfying many useful properties.  The properties needed for the
  current argument are as follows: there is an ordering
  $E_1,\ldots,E_m$ of the edges of $\Gamma$ such that:
  \begin{enumerate}
  \item For every $i$, the graph $\Gamma_i=E_1\cup\cdots\cup E_i$ has
    no valence one vertices.
  \item \label{item:2} For every $i$, $f(E_i)=u_iE_iv_i$ with
    $u_i,v_i$ (possibly trivial) edge paths in $\Gamma_{i-1}$.
  \end{enumerate}
  We now prove that $G_\phi=\pi_1(M_f)$ belongs to $\AC(\vnil)$ by
  induction on the number, $m$, of edges in the marked graph $G$.  If
  $m=1$, then $\phi$ is the identity automorphism.
  In this case, $G_\phi\simeq \mathbb{Z}^2$, which belongs to $\AC(\vnil)$.

  For the inductive step, we use property (\ref{item:2}) to modify $f$
  by a homotopy and arrange that $f$ fixes an interval $J$ in the
  interior of $E_m$ and that $f^{-1}(J)=J$.  Let
  $M_f=\Gamma\times[0,1]/(x,0)\sim (f(x),1)$ be the mapping torus.  In
  $M_f$, the interval $J$ gives rise to an embedded annulus,
  $J\times S^1$, and hence (by Van Kampen's Theorem) to a splitting of
  $\pi_1(M_f)=G_\Phi$ over an infinite cyclic group: either
  $G_\Phi=H\ast_\ZZ H'$ or $G_\Phi=H\ast_\ZZ$ according to whether or
  not $E_m$ is separating in $\Gamma=\Gamma_m$.  The properties of $f$
  stated above imply that the groups $H$ and $H'$ are mapping tori of
  free group automorphisms represented by graphs satisfying the above
  properties and which have fewer edges than $\Gamma$, so they belong
  to $\AC(\vnil)$ by induction.

  \begin{figure}[h]
    \begin{center}
     \def\svgwidth{\linewidth}
     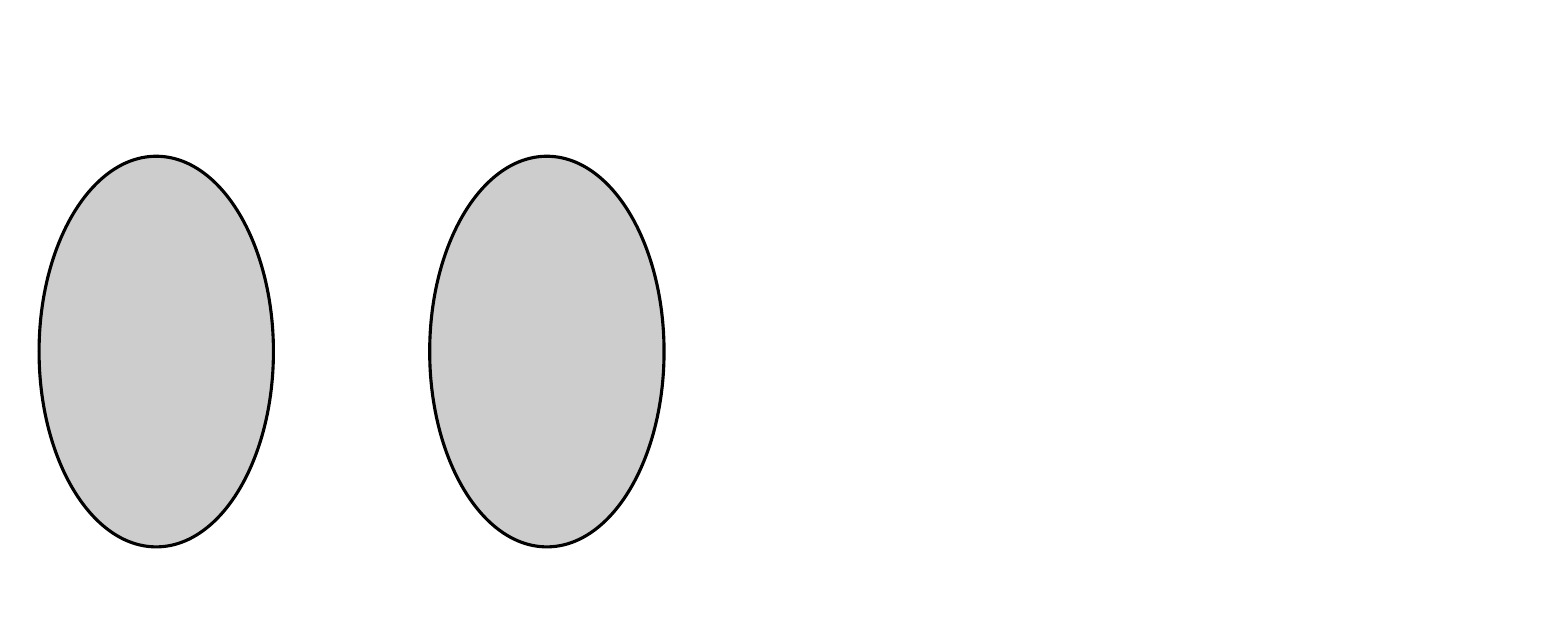
     \caption{The interval $J$ corresponds to an embedded annulus in
       $M_f$.}
      \label{fig:JSplitting}
    \end{center}
  \end{figure}
  
  Let $T$ be the Bass-Serre tree associated to this splitting.  It is
  not necessarily the case that $G_\phi\curvearrowright T$ is
  acylindrical.  In fact, if $E$ and $E'$ are two edges in $T$, then
  \begin{claim}
    Either $\Stab(E)=\Stab(E')$ or $\Stab(E)\cap\Stab(E')=1$.
  \end{claim}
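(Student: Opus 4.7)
The plan is to choose a convenient basepoint for $M_f$, identify the edge group with the canonical $\mathbb{Z}$-factor of the semidirect product, and reduce the dichotomy to a statement about fixed subgroups of iterates of $\Phi$.

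First, take the basepoint of $M_f$ to be a point $p$ in the interior of $J$. Since $f$ fixes $J$, we have $f(p)=p$, and $\pi_1(M_f,p)$ identifies with $F_n\rtimes_\Phi\mathbb{Z}$, where $F_n=\pi_1(\Gamma,p)$, $\Phi=f_*$, and the generator $t$ of the $\mathbb{Z}$-factor is represented by the arc $\{p\}\times[0,1]$. This arc is precisely the core of the annulus $J\times S^1$, so the edge group of the splitting is $C=\langle t\rangle$ and every edge stabilizer of $T$ is a conjugate $g\langle t\rangle g^{-1}$.

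Next, write $g=ut^k$ with $u\in F_n$; since $t^k$ centralizes $\langle t\rangle$, we have $g\langle t\rangle g^{-1}=u\langle t\rangle u^{-1}$, so it suffices to consider $u\in F_n$. A direct semidirect-product computation gives $u t^n u^{-1}=u\,\Phi^n(u^{-1})\,t^n$. Comparing $t$-exponents, $u t^n u^{-1}\in\langle t\rangle$ if and only if $u\,\Phi^n(u^{-1})=1$, i.e., $u\in\mathrm{Fix}(\Phi^n)$. Consequently $u\langle t\rangle u^{-1}\cap\langle t\rangle$ is trivial exactly when $u\notin\mathrm{Fix}(\Phi^n)$ for every $n\ge 1$, while $u\langle t\rangle u^{-1}=\langle t\rangle$ exactly when $u\in\mathrm{Fix}(\Phi)$.

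The claim therefore reduces to the rotationlessness statement $\mathrm{Fix}(\Phi^n)=\mathrm{Fix}(\Phi)$ for every $n\ge 1$. I would arrange this by replacing $\phi$ with a further positive power---harmless because $\AC(\vnil)$ is closed under finite-index overgroups---and invoking the fact that a sufficiently high power of any polynomially growing outer automorphism admits an improved relative train track representative (UPG, in the language of \cite{BFH:Tits1}) in which every periodic point and every periodic Nielsen path is fixed; the required equality of fixed subgroups for the lift $\Phi$ determined by the fixed point $p$ then follows. The main obstacle lies here: confirming that the rotationless property of the outer automorphism $\phi$ descends to the precise fixed-subgroup equality for the specific lift $\Phi$ we have chosen. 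Everywhere else the argument is a clean identification of edge stabilizers with $\langle t\rangle$ and its conjugates, followed by a one-line computation in the semidirect product.
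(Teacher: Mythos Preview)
Your route is the paper's route: identify the edge group with $\langle t\rangle$, observe that every edge stabilizer is $u\langle t\rangle u^{-1}$ for some $u\in F_n$, and compute in the semidirect-product normal form. The paper then simply asserts that $\langle gt\rangle\cap\langle t\rangle\neq 1$ forces $g=1$, with no further justification; you have correctly unpacked this as the equality $\Fix(\Phi^k)=\Fix(\Phi)$ for all $k\ge 1$ and proposed to secure it by passing to a further power and invoking the UPG/rotationless machinery of \cite{BFH:Tits1}. The obstacle you flag---that rotationlessness of the outer class must descend to the fixed-subgroup equality for the specific lift $\Phi=f_*$ at the chosen basepoint---is genuine, and it is precisely the step the paper's proof glosses over as well. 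In short, you have taken the same path and put your finger on the one place where real work is needed; the required equality can indeed be extracted from the properties of the improved relative train tracks of \cite[Theorem~5.1.5]{BFH:Tits1}, but neither you nor the paper spells this out.
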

  \begin{proof}[Proof of Claim]
    Consider the standard presentation,
    \begin{equation*}
      G_\Phi=\langle x_1,\ldots,x_n,t\mid tx_it^{-1}=\Phi(x_i)\rangle.
    \end{equation*}
    We remind the reader that every element of $h\in G_\Phi$ can be
    written uniquely as $gt^k$ for some integer $k$ and some element
    $g\in F_n$ as follows: if
    $h=g_0t^{k_1}g_1t^{k_2}\ldots t^{k_l}g_l$, then using the
    relations we have
    \begin{equation*}
      h=g_0\Phi^{k_1}(g_1)\Phi^{k_1+k_2}(g_2)\ldots\Phi^{\sum k_i}(g_l)t^{\sum k_i}
    \end{equation*}

    Now the fundamental group of the embedded annulus,
    $\pi_1(J\times S^1)$, is identified in $G_\Phi$ with the stable
    letter, $t$.  Evidently the edge stabilizers in $T$ are precisely
    the conjugates of $\langle t\rangle$.  Using the observation
    above, we see that any conjugate of $t$ can be written in the form
    $gt$ for some $g\in F_n$ and moreover that the element $g$ is
    unique.  Thus, if $\Stab(E)\cap\Stab(E')\neq 1$, then
    $\langle gt\rangle\cap \langle t\rangle\neq 1$ and therefore
    $g=1\in F_n$, so $\Stab(E)=\Stab(E')$.
  \end{proof}

  Returning to the proof, for an edge $E$ of $T$, we let $T_E$ be the
  forest in $T$ consisting of edges whose stabilizer is equal to
  $\Stab(E)$.  That $T_E$ is connected follows from the fact that $T$
  is a tree.  We are interested in the stabilizer $\Stab(T_E)$;
  suppose that $\Stab(E)=\langle t\rangle$ and that $w\in \Stab(T_E)$
  is such that $w^{-1}\cdot E=E'\in T_E$.  As above, we can write $w$
  uniquely as $w=gt^k$.  On one hand, the definition of $T_E$ provides
  that the stabilizer of $E'$ is equal to $\langle t\rangle$.  On the
  other hand,
  \begin{equation*}
    \Stab(E')=\langle wtw^{-1} \rangle
    = \langle(gt^k) t(t^{-k}g^{-1})\rangle
    =\langle gtg^{-1}\rangle
    =\langle g\Phi(g)^{-1}t\rangle.
  \end{equation*}
  We conclude that
  $\Stab(T_E)\simeq \langle t\rangle\times \Fix(\Phi)$, and recall
  that $\Fix(\Phi)$ is a finite rank free group
  \cite{Ger:Finite-rank,BH:TrainTracks}. Since $\AC(\vnil)$ is closed
  under taking products, and free groups belong to $\AC(\vnil)$, we
  conclude that $\Stab(T_E)$ belongs to $\AC(\vnil)$.

  The subtrees $\{T_E\}_{E\in T}$ form a transverse covering of $T$
  (this is implied by the Claim).  Letting $S$ denote the skeleton of
  this transverse cover (refer to \S\ref{sec:TransverseCover} for
  definitions and notation), we observe the following:
  \begin{claim}
    The action $G_\Phi\curvearrowright S$ is acylindrical.
  \end{claim}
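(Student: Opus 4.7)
The plan is to prove the stronger statement that the pointwise stabilizer in $G_\Phi$ of any segment $\sigma \subset S$ of length at least $5$ is trivial. The key observation is that the $G_\Phi$-action on $S$ is induced from its action on $T$: fixing a $V_1$-vertex $p$ of $S$ means fixing the corresponding point of $T$, while fixing a $V_0$-vertex of $S$ means stabilizing (as a set) the corresponding subtree $T_E \subset T$.

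Suppose then that $g \in G_\Phi$ fixes $\sigma$ pointwise. By the bipartite alternation of $\sigma$ and the length assumption, $\sigma$ contains two distinct interior $V_0$-vertices $Y_1, Y_2$, each flanked within $\sigma$ by two adjacent $V_1$-vertices; call these $p_i, q_i$ for $i=1,2$, where $Y_i = T_{E_i}$. The incidence $p_i, q_i \in T_{E_i}$ combined with the subtree property places the geodesic $[p_i, q_i] \subset T$ inside $T_{E_i}$, so every edge of this geodesic has $G_\Phi$-stabilizer equal to $\Stab(E_i)$. Since $g$ is a tree isometry fixing the two endpoints of a non-degenerate arc (non-degenerate because distinct $V_1$-vertices of $S$ correspond to distinct points of $T$), it fixes the entire arc pointwise. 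Hence $g \in \Stab(E_i)$ for $i = 1, 2$. Because $Y_1 \neq Y_2$, the previous Claim yields $\Stab(E_1) \cap \Stab(E_2) = 1$, so $g = 1$.

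I do not foresee a significant obstacle here. The one routine item to check is that every segment of $S$ of length $\geq 5$ really does contain two interior $V_0$-vertices each flanked by two $V_1$-vertices in $\sigma$; this is a short inspection of the two possible parities (segment begins at a $V_0$- or a $V_1$-vertex). Beyond this, the genuine content has already been placed in the preceding Claim, and acylindricity then drops out by combining it with the standard principle that an isometry of a tree fixing two points must fix the arc between them. It is also worth remarking that this argument actually gives a stronger conclusion than acylindricity in the usual sense, since the pointwise stabilizer of a sufficiently long segment is trivial rather than merely finite.
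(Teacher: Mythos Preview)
Your proof is correct and follows essentially the same approach as the paper's: both arguments reduce to finding two edges of $T$, lying in distinct subtrees $T_{E_1}$ and $T_{E_2}$, that are fixed by $g$, and then invoke the first Claim to conclude $g=1$. The paper moves the endpoints to $V_1$-vertices and looks at the full geodesic in $T$ between them (which must traverse two distinct subtrees because the geodesic in $S$ passes through two $V_0$-vertices), whereas you localize around each interior $V_0$-vertex and use its flanking $V_1$-neighbors; this is only a cosmetic difference, and your version yields the slightly sharper constant $5$ instead of the paper's $6$.
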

  \begin{proof}[Proof of Claim]
    Let $v,v'\in V(S)$ be vertices with $d(v,v')\geq 6$ and suppose
    $g\in\Stab(v)\cap\Stab(v')$.  By moving to adjacent vertices if
    necessary, we may assume that $v,v'\in V_1$ so that they are
    labeled by intersections of subtrees in $\mathcal{Y}$ (i.e.,
    points in $T$) rather than by trees themselves.  We will again
    denote by $v$ and $v'$ the corresponding points in $T$.  Now $g$
    must fix the segment in $T$ connecting $v$ to $v'$, and after
    moving to adjacent vertices we still have $d_S(v,v')\geq 4$.  In
    particular, there are two vertices in $V_0$ on the segment in $S$
    between $v$ and $v'$; hence the segment connecting $v$ and $v'$ in
    $T$ contains edges in two distinct subtrees $T_E$ and $T_{E'}$.
    So $g\in \Stab(T_E)\cap\Stab(T_{E'})$ must stabilize two edges
    with different stabilizers and so $g=1$ by the first claim.
  \end{proof}
  Returning one last time to the proof of the proposition we remark
  that the vertex stabilizers in $S$ come in two flavors: stabilizers
  of vertices in $V_1$ are subgroups of vertex stabilizers in $T$,
  which belong to $\AC(\vnil)$ by induction on the number $m$ of edges; and
  stabilizers of vertices in $V_0$, which are isomorphic to
  $F\times \mathbb{Z}$ and also belong to $\AC(\vnil)$.
  Finally, applying Theorem \ref{Knopf} proves Proposition
  \ref{prop:PGFJC}.


\section{Extensions}
\label{sec:extensions}
In this final section, we use the transitivity principle to deduce
Theorem \ref{thm:extensions} from the results of the preceding one.

\newtheorem*{thm:extensions}{Theorem \ref{thm:extensions}}
\begin{thm:extensions}
  Let 
  \begin{equation*}
    1\longrightarrow F_n\longrightarrow
    G\overset{\pi}{\longrightarrow} Q{\longrightarrow} 1.
  \end{equation*}
   be a short exact
  sequence of groups
  If $Q$ satisfies the $K$-theoretic (resp.\ $L$-theoretic)
  Farrell-Jones conjecture, then so does $G$.
\end{thm:extensions}

Given a homomorphism $\varphi\colon G\to H$ and a family of subgroups
$\mathcal{F}$ of $H$, the family of subgroups
$\{K\leq G\mid \phi(K)\in\mathcal{F}\}$ is denoted by
$\varphi^*\mathcal{F}$.  When $\varphi$ is the inclusion of a
subgroup, we sometimes denote $\varphi^*\mathcal{F}$ by
$\mathcal{F}|_G$.  We now recall the two relevant results from
\cite{BLR:FJC-Applications}, which we have rephrased to suit our
purposes:

\begin{lemma}[{\cite[Lemma 2.3]{BLR:FJC-Applications}}]
  \label{lem:pullbacks}
  Let $\pi\colon G\to Q$ be a group homomorphism and let $\mathcal{F}$
  be a family of subgroups.  If $Q$ satisfies the Farrell-Jones
  Conjecture (for either $K$-theory or $L$-theory) relative to
  $\mathcal{F})$, then $G$ satisfies the Farrell-Jones Conjecture
  relative to $\pi^*(\mathcal{F})$.
\end{lemma}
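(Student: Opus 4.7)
The plan is to reduce the $G$-equivariant assembly statement to a $Q$-equivariant one by pulling classifying spaces back along $\pi$ and correspondingly pushing the coefficient category forward. The argument is formal once one works with the coefficient formulation of FJC.

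First I would verify the two structural facts about families and classifying spaces. The collection $\pi^*\mathcal{F}$ is indeed a family of subgroups of $G$: closed under taking subgroups because $\mathcal{F}$ is, and closed under conjugation because $\pi$ is a homomorphism. Moreover, if $E$ is a model for the classifying space $E_{\mathcal{F}}Q$, then the $G$-space $\pi^*E$ (same underlying space, $G$-action $g\cdot e := \pi(g)\cdot e$) is a model for $E_{\pi^*\mathcal{F}}G$: for $H\leq G$ one has $(\pi^*E)^H = E^{\pi(H)}$, which is contractible exactly when $\pi(H)\in \mathcal{F}$, i.e.\ $H\in\pi^*\mathcal{F}$, and empty otherwise.

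Second, I would invoke the induction structure of the equivariant homology theory underlying the Bartels--Reich / Bartels--L\"uck coefficient version of FJC. For every additive $G$-category $\mathcal{A}$ one builds an additive $Q$-category $\pi_*\mathcal{A}$ by an induction construction along $\pi$, together with a natural isomorphism
\begin{equation*}
H_*^G\bigl(\pi^*X;\, \mathbf{K}_{\mathcal{A}}\bigr) \xrightarrow{\ \cong\ } H_*^Q\bigl(X;\, \mathbf{K}_{\pi_*\mathcal{A}}\bigr),
\end{equation*}
natural in the $Q$-CW complex $X$ and compatible with the collapse $X\to\mathrm{pt}$. Applied to $X = E_{\mathcal{F}}Q$ and $X=\mathrm{pt}$, and combined with the first step, this identifies the $G$-assembly map relative to $\pi^*\mathcal{F}$ with coefficients in $\mathcal{A}$ with the $Q$-assembly map relative to $\mathcal{F}$ with coefficients in $\pi_*\mathcal{A}$.

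Third, the hypothesis that $Q$ satisfies FJC relative to $\mathcal{F}$ is, in the coefficient formulation, the assertion that the $Q$-assembly map is an isomorphism for every additive $Q$-category of coefficients, in particular for $\pi_*\mathcal{A}$. Combined with the natural identification above, this forces the $G$-assembly map relative to $\pi^*\mathcal{F}$ with coefficients in $\mathcal{A}$ to be an isomorphism for every $\mathcal{A}$, which is exactly the conclusion. The main obstacle is not conceptual but organizational: one must construct $\pi_*\mathcal{A}$ and verify that it fits into a genuine induction structure in the sense of L\"uck, so that the pulled-back and induced assembly maps are naturally isomorphic. This bookkeeping is the entire reason the lemma is stated and used at the level of the coefficient version of FJC rather than for the naive form with integer group-ring coefficients.
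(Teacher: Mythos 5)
The paper does not prove Lemma~\ref{lem:pullbacks}; it is cited as a known result from \cite{BLR:FJC-Applications}. Your argument is correct and is essentially the proof given in that reference: verify that $\pi^*\mathcal{F}$ is a family, observe that $\pi^*E_{\mathcal{F}}Q$ models $E_{\pi^*\mathcal{F}}G$, and then use the induction structure on the equivariant homology theory to transport a $G$-coefficient category $\mathcal{A}$ to a $Q$-coefficient category $\pi_*\mathcal{A}$ so that the two assembly maps are naturally identified --- which is exactly why the statement lives at the level of FJC with coefficients.
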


\begin{theorem}[{\cite[Theorem 2.4]{BLR:FJC-Applications}}]
  \label{thm:transitivity}
  Let $\mathcal{F}\subseteq \mathcal{G}$ be two families of subgroups
  of $G$.  Assume that for every element $H\in\mathcal{G}$, the group
  $H$ satisfies the Farrell-Jones Conjecture for $\mathcal{F}|_H$.
  Then $G$ satisfies the Farrell-Jones Conjecture relative to
  $\mathcal{F}$ if and only if $G$ satisfies FJC relative to
  $\mathcal{G}$.
\end{theorem}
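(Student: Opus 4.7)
The plan is to deduce the statement from Theorem \ref{main} by combining the transitivity principle (Theorem \ref{thm:transitivity}) with the pullback lemma (Lemma \ref{lem:pullbacks}). Set $\mathcal{G} = \pi^*(\vcyc)$, the family of subgroups of $G$ whose image in $Q$ is virtually cyclic. Since a homomorphic image of a virtually cyclic group is virtually cyclic, one has $\vcyc \subseteq \mathcal{G}$ as families of subgroups of $G$.

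Applying Lemma \ref{lem:pullbacks} to $\pi$ together with the hypothesis on $Q$ gives at once that $G$ satisfies FJC relative to $\mathcal{G}$. By Theorem \ref{thm:transitivity}, to promote this to FJC relative to $\vcyc$ it is enough to verify that every $H \in \mathcal{G}$ satisfies FJC relative to $\vcyc|_H$, which is nothing other than ordinary FJC for $H$. So the task reduces to proving FJC for every subgroup $H \leq G$ whose image $\pi(H)$ is virtually cyclic.

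My strategy for the reduced problem is to show any such $H$ lies in $\AC(\vnil)$; FJC then follows from Proposition \ref{prop:FJCInheritance}(ii). The naive approach through the sequence $1 \to H \cap F_n \to H \to \pi(H) \to 1$ runs into the obstruction that $H \cap F_n$ can be an infinitely generated free group, to which Theorem \ref{main} does not apply. To sidestep this, I instead work with the larger subgroup $\widetilde{H} := \pi^{-1}(\pi(H)) \supseteq H$, which fits into the clean short exact sequence
\begin{equation*}
1 \longrightarrow F_n \longrightarrow \widetilde{H} \longrightarrow \pi(H) \longrightarrow 1
\end{equation*}
with kernel the entire rank-$n$ free group. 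Since $\pi(H)$ is virtually cyclic, a finite-index subgroup of it is either trivial or infinite cyclic, so the corresponding finite-index subgroup of $\widetilde{H}$ is either $F_n$ itself or a mapping torus $F_n \rtimes \mathbb{Z}$. The latter belongs to $\AC(\vnil)$ by Theorem \ref{main}, and the former belongs to $\AC(\vnil)$ as a subgroup of the mapping torus for the identity. Closure of $\AC(\vnil)$ under finite-index overgroups (Proposition \ref{prop:FJCInheritance}(i)) then puts $\widetilde{H} \in \AC(\vnil)$, and closure under subgroups yields $H \in \AC(\vnil)$.

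The one conceptual point worth flagging is the replacement of $H$ by its full preimage $\widetilde{H}$: it is legitimate because $\AC(\vnil)$ is subgroup-closed, and it is essential because it trades the potentially wild kernel $H \cap F_n$ for the genuinely finitely generated $F_n$ that Theorem \ref{main} requires. Everything else is routine bookkeeping with the inheritance properties in Proposition \ref{prop:FJCInheritance} and the two results from \cite{BLR:FJC-Applications} quoted above.
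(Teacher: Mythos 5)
The statement you were asked to address is Theorem~\ref{thm:transitivity}, the transitivity principle, which the paper quotes verbatim from \cite{BLR:FJC-Applications} (Theorem~2.4 there) and uses as a black box; the paper contains no proof of it. Your argument cannot serve as such a proof, since it explicitly invokes Theorem~\ref{thm:transitivity} as one of its two main ingredients (together with Lemma~\ref{lem:pullbacks}) — the reasoning would be circular with respect to the target statement. Proving Theorem~\ref{thm:transitivity} would require working directly with the assembly-map and equivariant-homology formalism of \cite{BLR:FJC-Applications}, which this paper deliberately treats as imported input.

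What you have in fact written is a proof of Theorem~\ref{thm:extensions}, and as such it is correct and, if anything, more carefully argued than the paper's own. Where the paper merely asserts that ``the assumption on elements of $\mathcal{G}=\pi^*(\vcyc)$ is equivalent to the statement of Theorem~\ref{thm:main},'' you rightly note that this is not literal: a subgroup $H\le G$ with $\pi(H)$ virtually cyclic need not itself be free-by-cyclic, since $H\cap F_n$ may be an infinitely generated free group, outside the scope of Theorem~\ref{main}. You supply the missing step — pass to the full preimage $\widetilde{H}=\pi^{-1}(\pi(H))$, locate a finite-index subgroup of it isomorphic to $F_n$ or to $F_n\rtimes\mathbb{Z}$, and then invoke closure of $\AC(\vnil)$ under finite-index overgroups and under subgroups (Proposition~\ref{prop:FJCInheritance}(i)) to place $\widetilde{H}$, and hence $H$, in $\AC(\vnil)$. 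That fill-in is exactly what makes the paper's proof of Theorem~\ref{thm:extensions} airtight. It is simply aimed at the wrong statement.
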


\begin{proof}[Proof of Theorem \ref{thm:extensions}]
  We first apply Theorem \ref{thm:transitivity} taking
  $\mathcal{F}=\vcyc$ and $\mathcal{G}=\pi^*(\vcyc)$; the assumption
  on elements of $\mathcal{G}$ is equivalent to the statement of
  Theorem \ref{thm:main}.  We can therefore conclude that $G$
  satisfies FJC relative to $\vcyc$ if and only if $G$ satisfies FJC
  relative to $\pi^*(\vcyc)$.  Lemma \ref{lem:pullbacks} says that the
  latter of these two statements holds provided that $Q$ satisfies the
  Farrell-Jones Conjecture relative to $\vcyc$, which is evident.
\end{proof}

\bibliographystyle{alpha}%
\bibliography{bibliography}
\end{document}